\newif\ifSC
\def\home{\hbox{\kern3pt \vbox to13pt{}%
		\pdfliteral{q 0 0 m 0 5 l 5 10 l 10 5 l 10 0 l 7 0 l 7 5 l 3 5 l 3 0 l f
			1 j 1 J -2 5 m 5 12 l 12 5 l S Q }%
		\kern 13pt}}
\newtheorem{theorem}{Theorem}
\newtheorem{lemma}{Lemma}
\newtheorem{assumption}{}
\newtheorem{feature}{}
\def\BibTeX{{\rm B\kern-.05em{\sc i\kern-.025em b}\kern-.08em
		T\kern-.1667em\lower.7ex\hbox{E}\kern-.125emX}}
\newcounter{relctr} 
\everydisplay\expandafter{\the\everydisplay\setcounter{relctr}{0}} 
\newcommand\numeq[1]%
\newcommand{\p}{\mathbf{p}}
\newcommand{\x}{\mathbf{x}}
\newcommand{\y}{\mathbf{y}}
\newcommand{\e}{\mathbf{e}}
\begin{document}	
\title{\small\textcolor{red}{THIS IS A DRAFT VERSION AND IS SUBJECT TO REVISION.}\\
	\huge Distributed Optimisation With Communication Delays}
\author{\IEEEauthorblockN{ Shuubham Ojha, Ketan Rajawat}
	\thanks{The authors are with Indian Institute of Technology Kanpur, Kanpur (India) 208016 (Email: shuubham@iitk.ac.in, ketan@iitk.ac.in).}
}

\maketitle

\begin{abstract}
This paper discusses distributed optimization over a directed graph. We begin with some well known algorithms which achieve consensus among agents including Fast Row-stochastic-Optimization with uncoordinated STep-sizes (FROST) \cite{xin2019frost}, which possesses linear convergence to the optimum. However FROST works only over fixed topology of underlying network. Moreover the updates proposed therein require perfectly synchronized communication among nodes. Hence communication delays among nodes, which are inevitable due to processing delays and/ or channel impairments, preclude the possibility of implementing FROST in the real world. In this paper we utlize a co-operative control strategy which makes convergence to optimum impervious to communication delays.
\end{abstract}

\begin{IEEEkeywords}
	Distributed optimization, (Fast row-stochastic optimization with uncoordinated steps) FROST algorithm, multi-agent systems, convergence rate, time varying delay, co-operative control, distributed estimation.    
\end{IEEEkeywords}
\section{Introduction}
Distributed optimization has been a subject of research for its numerous advantage over a centralized approach. A decentralized approach to optimization involves independent optimization of local functions at each node, unlike centralized optimization, which helps in parallel processing of data as well as in privacy preservation because each node is aware of it's local function only. The goal of such a decentralized optimization routine is to enforce consensus among nodes at the minimia of a global function, which is generally a weighted sum of the local functions. As in a classical consensus problem, each node acquires the local estimate of the optimum of the global function from it's neighbouring nodes and subsequently updates it's own estimate. The well known Fast Row-stochastic-Optimization with uncoordinated STep-sizes (FROST) algorithm \cite{xin2019frost} belongs to the lineage of distributed optimization algorithms. It allows nodes to converge to the optimizer of the global function under certain assumptions on nature of local functions and topology of the network over which communication among nodes takes place. The presence of adversarial nodes in a network along with non-adversarial ones botches up the attainment of the global optimum. \cite{sundaram2018distributed} gives a certain guarantee that the regular nodes can attain consensus under malicious behaviour though the consensus value is still far away from the optimum of the global objective function. In this paper we restrict our attention to adversary free networks.\par

The broadcast model of communication, which is utilised in most modern networks involves iterative telecast of state estimates by message sending nodes, which are subsequently received by neighbouring nodes whose identity is hidden from the sender. Since this model of communication does not require the sender to know the number or identity of the receivers it precludes the existence of network topologies whose adjacency matrix is doubly stochastic or column stochastic. The broadcast model of communication reinforces the relevance of the FROST like algorithms, which can be applied when nodes are commnicating over directed graphs with row-stochastic adjacency matrix to ensure linear convergence to the global optimum, provided the local functions are strongly convex and smooth. The utility of FROST is limited by the fact that convergence to the global optimum in presence of communication delays as well as abrupt changes to topology is not guaranteed. Since the gradient tracking iterate enables FROST to converge linearly, it is imperative that we adapt this iterate to the asynchronous case should our algorithm be required to inherit the convergence properties of FROST. In \cite{qu2014cooperative} the authors introduce a control law which dynamically estimates the first left eigenvector of the network adjacency matrix for changing topologies and communication delay between nodes. This estimated eigenvector can be utilized by a First Order Dynamic Average Consensus (FODAC) Algorithm \cite{zhu2010discrete} to perform average tracking in a row-stochastic setting. In the same spirit \cite{du2020accurate} introduces a delay robust average tracking mechanism, which allows tracing the average of arbitrary dynamic quantities in presence of delays. Our idea is to utlise this mechanism to track the average of gradients locally and utlize this average in a gradient descent equation to converge at the global optimum. \par

In this paper, we relax the requirement of synchronised communication while employing FROST by introducing a distributed co-operative control law with an associated gain variable. As before, we assume that the nodes are unaware of each others local function. The main objective of this paper is to reach global optimum of the sum of these local functions in the presence of communication delays under a suitable choice for the gain parameter. The paper is organized in the following manner. Section II outlines the notation that have been consistently used throughout the paper. In section III we draw a basic idea and  terminology for the distributed network. In section IV we revisit some related work and existing algorithms in distributed optimization and proceed to formalize the notion of asynchronous operation. Section V delineates the algorithm development and the intuition behind it. Section VI gives the convergence analysis while section VII provides simulation results to substantiate the theoretical results. Section VIII concludes the work with possible directions for future work.

\section{Notation}
Scalars are represented by small letters, vectors by small boldface letters, sets by calligraphic letters and matrices are denoted by capital boldface letters. The notation  $[\mathbf{u}]_{i}$ denotes the $i^{th}$ element of this vector $\mathbf{u}$. The index $t$ is used for the time or iteration index. The inner product between vectors $\mathbf{a}$ and $\mathbf{b}$ is denoted by $\langle\mathbf{a},\mathbf{b}\rangle$. The kroenecker product of matrices $\mathbf{A}$ and $\mathbf{B}$ is denoted by $\mathbf{A} \otimes \mathbf{B}$. The notation $\nabla$ denotes the gradient and $\|.\|$ denotes the Euclidean Norm. We use the notation $\mathbf{1}_{N}$ to denote the $N$ dimensional row vector consisting of all ones. The symbol $\dot{\x}(t) = \x(t+1) - \x(t)$ signifies the discrete time derivative of $\x$. 
 
\section{Terminology for a Distributed Network}
A graph is defined by $\mathcal{G} = (\mathcal{V}, \mathcal{E})$,  which contains a set of vertices (or nodes) $\mathcal{V} = \{v_i\}_{1\leq i \leq N}$, and a set of edges represented by $\mathcal{E} \subset \mathcal{V} \times \mathcal{V} = \{v_i,v_j\}_{1\leq i,j\leq N, i \neq j}$. If for each $i$,$j$ $\in$ $\{1,\hdots N\}$, $(v_i,v_j)\in \mathcal{E}$ $\implies$ $(v_j,v_i)\in \mathcal{E}$, graph $\mathcal{G}$ is called undirected otherwise it is called a directed graph. The set of in-neighbours for a node $v_i \in \mathcal{V}$ is defined by $\mathcal{N}_i^{-} \triangleq \{v_j \in \mathcal{V} |(v_j,v_i)\in \mathcal{E}\}$, which is the set of agents that can send information to $v_i$, while the set of out-neighbours is defined as $\mathcal{N}_i^{+} \triangleq \{v_j \in \mathcal{V} |(v_i,v_j)\in \mathcal{E}\}$. The in-degree and out-degree is the size or cardinality of respective set represented as $d_i^{-} \triangleq |\mathcal{N}_i^{-}|$ and $d_i^{+} \triangleq |\mathcal{N}_i^{+}|$. Clearly for an undirected graph $ d_i \triangleq d_i^{-} = d_i^{+}$. We associate an adjacency matrix $\mathbf{A} = \{a_{ij}\}_{1\leq i,j\leq N}$ with graph $\mathcal{G}$ such that $a_{ij} > 0$ whenever $(v_j,v_i)\in \mathcal{E}$ and denotes the weight of this edge, otherwise $a_{ij} = 0$. We define the degree matrix as a diagonal matrix $\mathbf{D}$ = $\{d_{ij}\}_{1\leq i,j\leq N}$ with $d_{ii} \triangleq \sum\limits_{j=1}^{n}$ $a_{ij}$. The Laplacian $\mathbf{L}$ is defined as $\mathbf{L}$ = $\mathbf{D}$ - $\mathbf{A}$. The normalized Laplacian is defined as $\mathbf{L'}$ = $\mathbf{D}^{-1}\mathbf{L}$. A graph is said to be strongly connected if there is a directed path from each node to another node in the graph. Throughout this paper we assume that matrix $\mathbf{A}$ is row stochastic and denote by $\mathbf{u}$ the left eigenvector of $\mathbf{A}$ corresponding to eigenvalue 1.

\section{Problem Formulation}
In a distributed network each node has its own local function $f_i$ such that $f_i:\mathbb{R}^{n}\rightarrow\mathbb{R}$ is convex with bounded subgradients and is only available to node $v_i$. Our goal is to solve a global optimization problem which we formulate as follows,
\begin{align*}
\underset{\mathbf{x}\in\mathbb{R}^{n}}{\min}F(\mathbf{x}) \triangleq \frac{1}{N}\sum\limits_{i=1}^{N}f_{i}(\mathbf{x}). \tag{P1}
\end{align*}
Distributed optimization envisages a distributed algorithm which enables each agent to converge to the global solution of Problem P1 by exchanging information with nearby agents over a directed graph. We formalize the set of assumptions which are standard in the literature for optimization of smooth convex functions and are essential for ensuring convergence.
\begin{assumption}\label{asenv1}
	The underlying graph is directed and strongly connected.
\end{assumption}
\begin{assumption}\label{asenv2}
	Each local function is convex with bounded sub-gradient.
\end{assumption}
\begin{assumption}\label{asenv3}
	Each local function $f_i$ is smooth with constant $l_i$ is strongly convex with constant $\sigma_{i}$.
\end{assumption}
\begin{assumption}\label{asenv4}
	Each node $v_i$ knows the number of out-neighbours it possesses.
\end{assumption}
Distributed Gradient Descent (DGD) is a popular algorithm for solving (P1) where each agent $v_i$ maintains a local estimate $\x_i(t+1)$ and implements the following update iteration \cite{xi2017distributed}:
\begin{align}
\x_i(t+1) = \sum\limits_{j=1}^{N} a_{ij}\x_j(t) - \alpha_t\nabla f_i(\x_i(t)).
\end{align}
where $\mathbf{A} = \{a_{ij}\}_{1\leq i,j \leq N}$ is doubly stochastic and $\alpha_t$ is a diminishing step size satisfying $\sum\limits_{t=0}^{\infty}\alpha_t = \infty$ and $\sum\limits_{t=0}^{\infty}\alpha_t^2 < \infty$ and $\nabla f_i(x_i(t))$ is the local gradient calculated at each node.
To accelerate the convergence rate of DGD a method based on gradient tracking is proposed \cite{xu2015augmented} which involves an additional variable $\mathbf {y}_i(t)$, which tracks the average of gradient of nodal functions $f_{i}$. The new iteration is:
\begin{align}
\mathbf{x}_{i}(t+1)&=\sum\limits_{j=1}^{N}a_{ij}\mathbf{x}_{j}(t)-\alpha\mathbf{y}_{i}(t)\\
\mathbf{y}_{i}(t+1)&=\sum\limits_{j=1}^{N}a_{ij}\mathbf{y}_{j}(t)+\nabla f_{i}\left(\mathbf{x}_{i}(t+1)\right)-\nabla f_{i}\left(\mathbf{x}_{i}(t)\right).
\end{align}
Here update (2) is a descent equation in which $\x_i(t)$ is the estimate of $\x^{*}$ by node $i$ at iteration $t$ and $\mathbf {y}_i(t)$ in update (3) tracks the average of local gradients $\frac {1}{N}\sum\limits_{i=1}^{N}\nabla f_{i}\left (\mathbf {x}_i(t)\right)$, provided $\y_{i}(0) = \nabla f_i(\x_i(0))$. It is shown in \cite{xi2015directed} that  $\mathbf {x}_i(t)$  converges linearly to $\x^{*}$ when $\mathbf{A}$ is doubly stochastic, $f_{i}'s$ are strongly convex and $\alpha_{t}$ is a sufficiently small and constant step size with $\alpha_t = \alpha$.  \par
When $\mathbf{A}$ is column-stochastic, \cite{xi2017add} introduces the ADD-OPT algorithm, whose convergence is assured under assumptions \ref{asenv1}-\ref{asenv4} and which involves the iterates:
\begin{align} 
&\mathbf {x}_{i}(t+1)=\sum _{j\in \mathcal {N}_i^{{\rm{-}}}}a_{ij}\mathbf {x}_{j}(t)-\alpha \mathbf {w}_{i}(t) \\ 
&\mathsf {y}_{i}(t+1)=\sum _{j\in \mathcal {N}_i^{{\rm{-}}}}a_{ij}\mathsf {y}_{j}(t)\\ 
&\mathbf {z}_{i}(t+1)=\frac{\mathbf {x}_{i}(t)}{\mathsf {y}_{i}(t)}\\ 
&\mathbf {w}_{i}(t+1)=\sum _{j\in \mathcal {N}_i^{{\rm{-}}}}a_{ij}\mathbf {w}_{j}(t)+\nabla f_i(\mathbf {z}_{i}(t+1))-\nabla f_i(\mathbf {z}_{i}(t))
\end{align}
where $\alpha$'s is a constant step size chosen mutually by each agent. $\y_{i}(t)$ is initialised as $\y_{i}(0) = 1$. The update (5) tracks the first left eigenvector of $\mathbf{A}$. Update (7), which is analogous to (3) in the column stochastic setting, tracks the average of local gradients $\frac {1}{N}\sum\limits_{i=1}^{N}\nabla f_{i}\left (\mathbf {z}_i(t)\right)$, provided $\mathbf{w}_{i}(0) = \nabla f_{i}(\mathbf{z}_{i}(0))$. Update (4) is essentially a gradient descent equation where the descent direction is $\mathbf{w}_i(t)$ instead of $\nabla f_{i}\left(\mathbf{x}_i(t)\right)$ and update (6) normalizes the node estimates $\x_{i}(t)$ which is essential for producing consensus. \par  
An additional algorithm based on the gradient tracking called FROST algorithm is introduced in \cite{xin2019frost} for the case when $\mathbf{A}$ is row-stochastic. It's convergence is ensured under assumptions \ref{asenv1}-\ref{asenv3}. The updates involved are given as follows:
\begin{align}
&\mathbf{y}_i(t+1)=\sum\limits_{j=1}^{N}a_{ij}\mathbf{y}_j(t)\\
&\mathbf{x}_i(t+1)=\sum\limits_{j=1}^{N}a_{ij}\mathbf{x}_j(t)-\alpha_t\mathbf{z}_j(t)\\
&\mathbf{z}_i(t+1)=\sum\limits_{j=1}^{N}a_{ij}\mathbf{z}_i(t)+\frac{\nabla f_{i}\left(\mathbf{x}_i(t+1)\right)}{\left[\mathbf{y}_i(t+1)\right]_{i}}-\frac{\nabla f_{i}\left({\vphantom{\mathbf{y}_i(t+1)}} \mathbf{x}_i(t)\right)}{\left[{\vphantom{\mathbf{y}_i(t+1)}} \mathbf{y}_i(t)\right]_{i}}
\end{align}
where $\alpha_t$'s are the uncoordinated step-sizes locally chosen at each agent. $\y_{i}(t)$ is initialised as $\y_{i}(0) = \e_{i}$, the $i^{th}$ basis vector of $\mathbb{R}^{N}$. The update (8) tracks the first left eigenvector of $\mathbf{A}$. Update (10), which is analogous to (3) in the row stochastic setting, tracks the average of local gradients $\frac {1}{N}\sum\limits_{i=1}^{N}\nabla f_{i}\left (\mathbf {x}_i(t)\right)$, provided $\mathbf{z}_{i}(0) = \nabla f_{i}(\x_{i}(0))$ and update (9) is essentially a gradient descent equation where the descent direction is $\mathbf{z}_i(t)$ instead of $\nabla f_{i}\left(\mathbf{x}_i(t)\right)$. \par 
Within the distributed optimization framework considered here, the focus is on distributed algorithms that can be applied to optimize a sum of smooth and strongly convex functions by message passing among nodes connected by a directed graph whose  topology is defined by row-stochastic and column-stochastic adjacency matrices. Each of these algorithms, however, require perfectly synchronized communication among nodes so that updates at time instant $t+1$ utilize node estimates at time instant $t$. Our goal in this paper is to develop an asynchronous analog of the FROST algorithm, delineated above, that would allow some of the nodes in the network to  “lag behind” in the event they are unable to supply their out-neighbours with the latest copy of their local variables due to large communication bottleneck and/or processing delays. Our asynchronous algorithm utilizes a dynamic average tracking mechanism which is robust to bounded communication delays. The key feature of the proposed algorithm is the adjustable gain parameter $\kappa'$, which is a function of the expected delay on a given communication link. We begin with enlisting the desirable features of an algorithm that seeks to solve (P1) in presence of delays. Specifically, it is required that any such algorithm meets the following requirements.
\begin{feature}
	The algorithm should allow nodes to “lag behind” due to poor channel condition and/or processing delays and supply their out-neighbours with the older copies of their local variables. 
\end{feature}
\begin{feature}
	The algorithm should allow a distributed implementation, that is, without requiring a central server that communicates with every node.
\end{feature}

The gradient tracking technique that has been employed to develop 
decentralized algorithms to track the average of the gradients \cite{nedic2017achieving}, \cite{tian2020achieving} enjoy linear convergence even under time-varying communication topologies. The next section addresses the challenge of proposing a novel gradient tracking algorithm for distributed optimization that is robust to delays in information exchange. Equipping our algorithm with the gradient tracking technique will help it in inheriting the convergence properties of FROST.  

\section{Algorithm Development}.
This section details the delay robust directed distributed gradient descent algorithm that incorporates the features (F1)-(F2) in its design. We begin with motivating the design of our algorithm by rewriting iterate (10) from FROST algorithm in matrix form as follows,  
\begin{align}
 \mathbf{z}(t+1)=\bar{\mathbf{A}}\mathbf{z}(t)+\widetilde{Y}_{t+1}^{-1} \nabla\mathbf{f}\left(\mathbf{x}(t+1)\right)-\widetilde{Y}_{t}^{-1} \nabla\mathbf{f}\left(\mathbf{x}(t)\right)
\end{align}
where $\bar{\mathbf{A}} = \mathbf{A}\otimes\mathbf{I}_{n}$, $\underline{Y}_{t}=\left[\mathbf{y}_{1}(t),\cdots,\mathbf{y}_{N}(t)\right]^{\top}, Y_{t} = \underline{Y}_{t} \otimes \mathbf{I}_{n}, \widetilde{Y}_{t}=\text{diag}\left(Y_{t}\right)$ with $\underline {Y}_{0}=\mathbf{I}_{N}$, $\mathbf{z}_{0} = \nabla{\mathbf{f}(\x(0))}$ and $\nabla\mathbf{f}\left(\mathbf{x}(t)\right)$, $\mathbf{z}(t)$ collects the variables $\nabla f_{i}\left(\mathbf{x}_{i}(t)\right)$ and $\mathbf{z}_{i}(t)$ respectively in $\mathbb{R}^{Nn}$. \par
Multiplying (11) by $\bar{\mathbf{u}}=\mathbf{u}\otimes\mathbf{I}_{n}$ and recalling that $\bar{\mathbf{u}}(\mathbf{A}\otimes\mathbf{I}_{n}) = \bar{\mathbf{u}}$ we get,
\begin{align}
 \bar{\mathbf{u}}\mathbf{z}(t+1)=\bar{\mathbf{u}}\mathbf{z}(t)+\bar{\mathbf{u}}\widetilde{Y}_{t+1}^{-1}\nabla\mathbf{f}(\mathbf{x}(t+1))-\bar{\mathbf{u}}\widetilde{Y}_{t}^{-1}\nabla\mathbf{f}(\mathbf{x}(t))
\end{align} 
Repeating (12) for all values of $t$ and adding we get,
\begin{align}
 \bar{\mathbf{u}}\mathbf{z}(t+1)=\bar{\mathbf{u}}\mathbf{z}(0)+\bar{\mathbf{u}}\widetilde{Y}_{t+1}^{-1}\nabla\mathbf{f}(\mathbf{x}(t+1))-\bar{\mathbf{u}}\widetilde{Y}_{0}^{-1}\nabla\mathbf{f}(\mathbf{x}(0))
\end{align}
Given the intial conditions $\widetilde{Y}_{0}=\mathbf{I}_{Nn}$ and $\mathbf{z}(0)=\nabla\mathbf{f}(\x(0))$, we have,
\begin{align}
\bar{\mathbf{u}}\mathbf{z}(t+1)=\bar{\mathbf{u}}\widetilde{Y}_{t+1}^{-1}\nabla\mathbf{f}(\mathbf{x}(t+1))
\end{align}
Equation (14) demonstrates how $\mathbf{z}(t)$ tracks the average of local gradients $\nabla{f}_{i}(\x_{i}(t))$ weighted by components of $\mathbf{u}$ in the absence of delays. We now take a detour to study average tracking in presence of delays before returning to fuse the delay robust average tracker with gradient descent to propose our algorithm. Section V-A describes the synchronous variant of an average consensus algorithm, which allows every node to converge at the average of initial values of arbitrarily changing local quantities. Next, Section V-B details the asynchronous variant of the average consensus algorithm, which allows every node to converge at the average of arbitrarily changing local quantities in presence of time varying bounded delays. Finally, our proposed algorithm which is a revised version of FROST that is impervious to bounded communication delays is presented in section V-C. 
\subsection{Average Tracking Without Delays}
Average tracking without delays in a multi agent setup is well known in literature. An algorithm to achieve consensus at the average of initial values of  dynamic quantities in absence of delays can be expressed as \cite{atay2010consensus}:
\begin{align} \dot {\x}_{i}(t)=-\kappa '\sum _{(j,i)\in \mathcal{E} } a_{ij}\left [{\x_{i}(t)-\x_{j}(t)}\right]\end{align}
where $\x_i(t)$ represents the real-time state of the $i^{th}$ agent at time instant $t$, $x_{i}^{0} = \x_{i}(0)$ represents the initial value of $\x_i(t)$ at $t = 0$. In \cite{atay2010consensus} authors show that all states in (11) reach the average consensus asymptotically, i.e., let $x_{i}^{*}=\displaystyle\lim _{t \to \infty }x_{i}(t)$, then $c_{x}=x^{*}_{i}=\dfrac {1}{N} \displaystyle{\sum _{i\in \mathcal{V} }} x_{i}^{0}$ and $c_{x}$ represents the consensus equilibrium of $\x_i(t)$. 

\subsection{Dynamic Average Tracking With Delays}
In the previous subsection we demonstrated consensus assuming delay free communication. In this subsection, we focus on the realistic case of tracking averages of time varying quantities in presence of delayed communication. Let $\mathbf{b}_{i}(t)$ be input dynamic state at node $i$ whose average is desired to be tracked and $\tau(t)$ be time varying delay with which each node receives information from it's in-neighbours. We introduce $\mathbf{r}_{i}(t)$ and $s_{i}(t)$ as auxilliary variables which will assist us in the average tracking process. $\kappa$ is a tunable scalar gain parameter which is a function of expected delay on a particular communication link and $\kappa'$ is $\kappa$ normalised by $d_{ii}$. Let $g_{i}(t)$ denote a square wave local to node $i$ such that all the $g_{i}(t)'s$ are perfectly synchronized. The time period of $g_{i}(t)$ is $T_g$ for each $i$ and it is given by,
\begin{align*}
g_{i}(t) = 0.5\bigg(sgn\bigg(\sin \frac{2\pi t}{T_g}\bigg) + 1\bigg)
\end{align*} 
We begin by reproducing the iterations from \cite{du2020accurate} as follows:
\begin{align} \label{12}
&\dot{{\mathbf{r}}}_i(t)= \dot{{\mathbf{b}}}_i(t)- \kappa' \sum\limits_{i,j \in \mathcal{E}} a_{ij}({\mathbf{r}}_i(t)-{\mathbf{r}}_j(t-\tau(t))) \tag{16a} \\
&\dot{{s}}_i(t)= \dot{{g}}_i(t) - \kappa'\sum\limits_{i,j \in \mathcal{E}} a_{ij}({s}_i(t)-{s}_j(t-\tau(t))) \tag{16b}
\end{align}  
where, $s_{i}(0)$ = $g_{i}(0) = 1$ and $\mathbf{r}_{i}(t) = 0$ for $t < 0$, $\mathbf{r}_{i}(0) = \mathbf{b}_{i}(0)$. Further it is assumed that $\tau(t)$ is sampled from a distribution with finite support for each $t$ and is therefore bounded by some $\tau_{max} > 0$. We denote $\mathbb{E}(\tau(t))$ by $\bar{\tau}$. Let $\dot{\mathbf{b}}$ = $[\Delta\mathbf{b}_{i}(0)]_{1\leq i \leq N}$ be a perturbation in $\mathbf{b}(0) = [\mathbf{b}_{i}(0)]_{1\leq i \leq N}$, when the system (16) is assumed to be at consensus equilibrium. If the initial equilibrium value is given by $\mathbf{r}_i$ = $c_{r}'$ and $s_i$ = $c_{s}'$ for each $i$, the new equilibrium states $c_r$ and $c_s$ are given by \cite{du2020accurate}: 
\begin{align*}
c_{r} = c_{r}' + \frac{(\mathbf{u} \otimes \mathbf{I}_{n})\dot{\mathbf{b}}}{1+\kappa\bar{\tau}} \tag{17a}\\
c_{s} = c_{s}' + \frac{\Delta g}{1+\kappa\bar{\tau}} \tag{17b}
\end{align*}
where, $\mathbf{u}$ is the First Left Eigenvector (FLE) of $\mathbf{A}$, see \cite{atay2010consensus}. Since $|\Delta g|$ = 1, initialising $c_{r}'$ = $\frac{\mathbf{u}\mathbf{b}(0)}{1+\kappa\bar{\tau}}$ and ensuring that changes in $\mathbf{b}$ coincide with rising and falling edges of $g$ we have that,
\begin{align*}
c_{p} = \frac{c_{r}}{|c_s - c_{s}'|} \tag{17c}
\end{align*}
tracks the weighted average of components of $\mathbf{b}(0) + \dot{\mathbf{b}}$ where components are weighed by elements of $\mathbf{u}$. 


\subsection{Proposed Algorithm}
We now utilise the average tracker from (16) in the gradient descent equation to reach the global optimum. This leads to updates (18a-e):
\\
\\
\textit{Eigenvalue Update}: 
\begin{align}
&{\e}_{i}\bigg(t\frac{T_g}{2}\bigg) = \sum\limits_{j=1}^{N}a_{ij}{\e}_{j}\bigg(t\frac{T_g}{2}-\tau\bigg(t\frac{T_g}{2}\bigg)\bigg) \tag{18a}
\end{align}  
Update (18a) tracks the eigenvalue of the adjacency matrix $\mathbf{A}$ under the initialisation $\e_{j}(t) = \e_{j}$ for $t\leq 0$, where $\e_j$ is the $j^{th}$ basis vector of $\mathbb{R}^{N}$. The selected value of $T_g$ enables ${\e}_{j}\big(t\frac{T_g}{2}-\tau\big(t\frac{T_g}{2}\big)\big) = {\e}_{j}\big((t-1)\frac{T_g}{2}\big)$. \\
\\
\textit{Dynamic Average Tracking}:
\begin{align}
 \dot{{\mathbf{r}}}_i(t) = \hspace{97mm} \nonumber \\ \bigg(\frac{\nabla{f}_{i}({\x}_{i}(t))}{[\e_{i}(t)]_{i}} - \frac{\nabla{f}_{i}\big({\x}_{i}\big(t-\frac{T_g}{2}\big)\big)}{\big[\e_{i}\big(t-\frac{T_g}{2}\big)\big]_{i}}\bigg)   \sum\limits_{l_{0}=-\infty}^{\infty}\delta\bigg(t-l_{0}\frac{T_g}{2}\bigg) \nonumber \hspace{25mm} \\   - 
\kappa' \sum\limits_{(j,i) \in \mathcal{E}} a_{ij}({\mathbf{r}}_i(t) - {\mathbf{r}}_j(t-\tau(t)) \hspace{22mm} \tag{18b}
\end{align}
Update (18b) is analogous to update (16a) for the dynamic average tracking case. The impulse train in the above update forces the change in normalized gradient values to coincide with the rising and falling edges of $T_g$. Next, we turn our attention towards obtaining a handle on $\tau(t)$, the delay with which nodes transmit information to their out-neighbors. This is accomplished via the following iterate.    
\begin{align}
 \dot{s}_i(t)= \dot{g}_i(t)- \kappa' \sum\limits_{(j,i) \in \mathcal{E}} a_{ij}({s}_i(t) - {s}_j(t-\tau(t))) \tag{18c}
\end{align}  
Update (18c) is analogous to equation (16b) and tracks the value of $\frac{1}{1+\kappa\bar{\tau}}$ for each epoch of length $\frac{T_g}{2}$. In the spirit of (17c), we proceed by normalizing the equilibrium state value of (18b) with that of (18c). 
\begin{align} 
\p_i\bigg(t\frac{T_g}{2}\bigg)= \sum\limits_{l=1}^{t-1} \frac{\mathbf{c}_r\big(l\frac{T_g}{2}\big)}{|c_{s} - c_{s}'|} + (\mathbf{u} \otimes \mathbf{I}_n) \bigg[\frac{\nabla f_i(\x_i(0))}{[\e_{i}(0)]_{i}}\bigg]_{1\leq i \leq N}^{T}
\tag{18d}
\end{align}
Update (18d) normalizes the equilibrium state value of $\mathbf{r}_{i}(t)$, $\mathbf{c}_r$ with $c_s$, the equilibrium state value of $s_{i}(t)$, to cancel out the impact of delays on the average tracking process. This gives us the change in the weighted average of the gradients. The change is summed with the initial weighted average, given by the second term, to obtain the weighted average at time $\frac{tT_g}{2}$.  
\\
\\
Finally, update (18e) is the usual gradient descent step, where the direction of descent is $\p_i\big(t\frac{T_g}{2}\big)$, the weighted average of scaled gradients. The step size $\alpha_t$ is chosen mutually by all the agents at time instant $\frac{tT_g}{2}$. As before, our choice of parameter $T_g$ enables ${\x}_j\big(t\frac{T_g}{2}-\tau\big(t\frac{T_g}{2}\big)\big) = \x_{j}\big((t-1)\frac{T_g}{2}\big)$. \\
\\
\textit{Gradient Descent Step}:
\begin{align}
 {\x}_i\bigg(t\frac{T_g}{2}\bigg)=\sum\limits_{j=1}^{N}a_{ij}{\x}_j\bigg(t\frac{T_g}{2}-\tau\bigg(t\frac{T_g}{2}\bigg)\bigg)-\alpha_{t}{\p}_i\bigg(t\frac{T_g}{2}\bigg) \nonumber \\ \tag{18e}
\end{align}
In the next section, we will establish that the proposed updates (18a-e) converge for appropriate choice of step size $\alpha_{t}$ under assumptions \ref{asenv1}-\ref{asenv3}.

\section{Convergence Analysis}
This section provides the convergence result for our proposed algorithm, delineated in (18a-e). The final convergence result in Theorem 2 presents a suitable value of step-size $\alpha_{k}$ to be selected mutually by all agents at time instant $\frac{kT_g}{2}$ so as to ensure convergence. Before we present the final convergence proof, we begin by presenting a few intermediary lemmas on which our convergence result relies. \par
The first lemma analyzes the iterate (18d) and establishes it's convergence to a weighted average of local gradients normalized by FLE estimates.   
\begin{lemma}
 The iteration (18d) converges to the weighted average of gradients, $\frac{\nabla f_i(\x_i(t))}{[\e_{i}(t)]_{i}}$, as $t$ approaches time instants $\frac{mT_g}{2}$ for $m \in \mathbb{Z}$.
\end{lemma}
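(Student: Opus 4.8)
The plan is to recognise (18b)--(18c) as a special case of the delay-robust average tracker (16a)--(16b) of Section~V-B, with the input dynamic state $\mathbf{b}_i(t)$ instantiated as the scaled local gradient $\nabla f_i(\x_i(t))/[\e_i(t)]_i$, and then to assemble the per-epoch equilibrium identities (17a)--(17c) through the running sum in (18d). The first step is to observe that the Dirac train $\sum_{l_0}\delta\!\left(t-l_0\tfrac{T_g}{2}\right)$ multiplying the gradient-difference term in (18b) concentrates every jump of the effective input $\mathbf{b}_i(\cdot)$ at the instants $l_0\tfrac{T_g}{2}$, which are precisely the rising and falling edges of the square wave $g_i$ driving (18c); hence, on each half-period $\big[(l-1)\tfrac{T_g}{2},\,l\tfrac{T_g}{2}\big)$, the pair (18b)--(18c) is a constant-input delayed-consensus system, to which the equilibrium analysis of \cite{du2020accurate,atay2010consensus} applies directly.

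Next I would carry out the per-epoch bookkeeping. On epoch $l$ the input perturbation is
\[
\dot{\mathbf{b}}\!\left(l\tfrac{T_g}{2}\right)=\Big[\tfrac{\nabla f_i(\x_i(lT_g/2))}{[\e_i(lT_g/2)]_i}-\tfrac{\nabla f_i(\x_i((l-1)T_g/2))}{[\e_i((l-1)T_g/2)]_i}\Big]_{1\le i\le N},
\]
so by (17a) the equilibrium of (18b) carries the increment $(\mathbf{u}\otimes\mathbf{I}_n)\dot{\mathbf{b}}(lT_g/2)/(1+\kappa\bar{\tau})$, while by (17b) together with $|\Delta g|=1$ the equilibrium of (18c) satisfies $|c_s-c_s'|=1/(1+\kappa\bar{\tau})$. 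Dividing one by the other, the delay factor $1+\kappa\bar{\tau}$ cancels, so $\mathbf{c}_r(lT_g/2)/|c_s-c_s'|=(\mathbf{u}\otimes\mathbf{I}_n)\dot{\mathbf{b}}(lT_g/2)$ is exactly the \emph{undistorted} increment of the $\mathbf{u}$-weighted average of the scaled gradients over epoch $l$. Substituting into (18d) and telescoping $\sum_{l=1}^{t-1}\dot{\mathbf{b}}(lT_g/2)=\mathbf{b}((t-1)T_g/2)-\mathbf{b}(0)$ collapses the sum; adding the initial weighted-average term $(\mathbf{u}\otimes\mathbf{I}_n)[\nabla f_i(\x_i(0))/[\e_i(0)]_i]_{1\le i\le N}$ leaves
\[
\p_i\!\left(t\tfrac{T_g}{2}\right)=(\mathbf{u}\otimes\mathbf{I}_n)\Big[\tfrac{\nabla f_i(\x_i((t-1)T_g/2))}{[\e_i((t-1)T_g/2)]_i}\Big]_{1\le i\le N}=\sum_{j=1}^{N}u_j\,\tfrac{\nabla f_j(\x_j((t-1)T_g/2))}{[\e_j((t-1)T_g/2)]_j},
\]
i.e. the $\mathbf{u}$-weighted average of the FLE-normalised gradients, up to the one-epoch lag intrinsic to a tracker that needs a full settling interval --- a lag that is immaterial in the limit $t\to\infty$ along the instants $mT_g/2$.

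Two supporting facts then have to be pinned down. First, that on each epoch the transient of (18b)--(18c) actually reaches (or uniformly approaches) the equilibria (17a)--(17b) before the next jump: this requires the delayed-consensus dynamics to be asymptotically stable \emph{uniformly} over $\tau(t)\le\tau_{\max}$, which is exactly where the tunable gain $\kappa'=\kappa/d_{ii}$ --- a function of $\bar{\tau}$ --- enters through the stability criterion of \cite{atay2010consensus,du2020accurate}, combined with choosing $T_g/2$ comfortably larger than the resulting settling time. Second, that the FLE estimates $[\e_i(t)]_i$ delivered by (18a) are only asymptotically exact: stacking (18a) over nodes (using that the chosen $T_g$ turns the delayed look-up $\e_j(\cdot-\tau)$ into the previous-epoch value) yields the row-stochastic power recursion, whence $[\e_i(mT_g/2)]_i=[\mathbf{A}^m]_{ii}\to u_i$ by primitivity of $\mathbf{A}$ under Assumption~\ref{asenv1}, so the weighting in the displayed limit is with the true first left eigenvector and the lemma follows by passing to the limit.

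I expect the principal obstacle to be the first of these facts: making the epoch-by-epoch superposition rigorous, i.e. controlling the discrepancy between the finite-time state at $lT_g/2$ and the idealised equilibrium used in (17a)--(17c), and then showing this discrepancy does not accumulate across the infinite sum in (18d). The clean route is to phase-lock the argument --- treat each half-period as a genuine reset from the previous equilibrium, establish a geometric per-epoch contraction rate $\rho=\rho(\kappa',\tau_{\max})<1$ for the error-to-equilibrium, and bound the tail $\sum_{l\ge L}\rho^{\,l}\,\|\dot{\mathbf{b}}(lT_g/2)\|$ using the bounded-subgradient Assumption~\ref{asenv2}, so that the residual vanishes as $t\to\infty$.
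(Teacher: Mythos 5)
Your proposal follows essentially the same route as the paper's proof: instantiate the delay-robust tracker (16a)--(16b) with the FLE-normalised gradients as the input $\mathbf{b}_i$, use $|c_s-c_s'|=1/(1+\kappa\bar{\tau})$ from (17b) to cancel the delay factor in (17a) so that $\mathbf{c}_r(l\tfrac{T_g}{2})/|c_s-c_s'|$ equals the undistorted weighted increment, and then telescope the sum in (18d) against the initial weighted-average term. The additional care you take (per-epoch settling within $T_g/2$, non-accumulation of transient error, the one-epoch lag in the telescoped index, and the convergence of the estimates $[\e_i(t)]_i$) goes beyond what the paper's short proof spells out, but the core argument is identical.
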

\begin{proof}
	Let $[\nabla f_i(k)]_{1\leq i \leq N}$ = $\big[\frac{\nabla f_1(\x_1(k))}{[\e_{1}(k)]_{1}} \hdots \frac{\nabla f_N(\x_N(k))}{[\e_{N}(k)]_{N}}\big]^{T}$. 
	From (13c) we have, 
	\begin{align*} 
	\frac{\mathbf{c}_r(l\frac{T_g}{2})}{|c_{s} - c_{s}'|} = (\mathbf{u} \otimes \mathbf{I}_n) \bigg[\nabla f_i\bigg(\frac{lT_g}{2}\bigg) - \nabla f_i\bigg(\frac{(l-1)T_g}{2}\bigg)\bigg]_{1\leq i \leq N}  
	\end{align*}
Substituting the above in (14d) we have, 
\begin{align*}
\p_i\bigg(t\frac{T_g}{2}\bigg) = (\mathbf{u} \otimes \mathbf{I}_n)\bigg[\nabla f_i\bigg(\frac{tT_g}{2}\bigg)\bigg]_{1\leq i \leq N}  
\end{align*}
which is the required result. 
\end{proof}

We have established the convergence of (18b), (18c) and (18d). In what follows we shall the establish the convergence of (18e) to the optimum $\x^{*}$ for each agent. To this end we shall first show in Lemma~\ref{lem4} that each agent converges to a consensus value. This is followed by Theorem~\ref{thm:errorinstrconv} and Theorem~\ref{Theorem3} which will give the final convergence result. \par
We take a detour to state and prove a property of smooth and strongly convex functions, known as the extension of co-coercivity property.
\begin{lemma}\label{lem:strongconv}
	Let, $\nabla F$ be Lipschitz contiuous with constant $L_{f} > 0$. If $F$ is strongly convex with parameter $\sigma_{f}$. Then, we have 
	\begin{align*}
	\langle \x - \y, \nabla F(\x) - \nabla F(\y) \rangle &  \geq \mu_1 \|\nabla F(\x) - \nabla F(\y)\|^2 \\
	& \hspace{0.5in} + \mu_2\|\x-\y\|^2,
	\end{align*}
	for all $\x,\y \in F$, with $\mu_1 = \frac{1}{\sigma_{f}+L_{f}}$ and $\mu_2 = \frac{\sigma_{f}L_{f}}{\sigma_{f}+L_{f}}$.
\end{lemma}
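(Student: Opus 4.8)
The plan is to reduce this extended co-coercivity bound to the ordinary co-coercivity inequality for convex smooth functions via the classical shift trick. First I would introduce the auxiliary function $g(\x) \triangleq F(\x) - \tfrac{\sigma_f}{2}\|\x\|^2$, so that $\nabla g(\x) = \nabla F(\x) - \sigma_f \x$. Strong convexity of $F$ with parameter $\sigma_f$ is precisely the statement that $g$ is convex, and I would then show that $\nabla g$ is Lipschitz with constant $L_f - \sigma_f$. Since $g$ is convex, this is equivalent to the descent-lemma estimate $g(\y) \le g(\x) + \langle \nabla g(\x), \y - \x\rangle + \tfrac{L_f - \sigma_f}{2}\|\x - \y\|^2$, which I would verify by subtracting $\tfrac{\sigma_f}{2}\|\y\|^2$ from the descent lemma for $F$ and using the identity $\|\y\|^2 - \|\x\|^2 - 2\langle \x, \y - \x\rangle - \|\x - \y\|^2 = 0$.

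Next I would apply the standard co-coercivity property to the convex, $(L_f-\sigma_f)$-smooth function $g$, namely
\begin{align*}
\langle \x - \y,\ \nabla g(\x) - \nabla g(\y)\rangle \ \geq\ \tfrac{1}{L_f - \sigma_f}\,\|\nabla g(\x) - \nabla g(\y)\|^2.
\end{align*}
If a self-contained argument is preferred to a citation, this follows by applying the descent lemma to the shifted function $\x \mapsto g(\x) - \langle \nabla g(\y),\x\rangle$, which is convex and attains its minimum at $\x = \y$, evaluating it at the point produced by one gradient step of length $1/(L_f-\sigma_f)$, repeating with the roles of $\x$ and $\y$ interchanged, and adding the two resulting inequalities.

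Finally I would substitute $\nabla g(\x) - \nabla g(\y) = \bigl(\nabla F(\x) - \nabla F(\y)\bigr) - \sigma_f(\x - \y)$ into the displayed inequality, expand both sides, multiply through by $L_f - \sigma_f$, and collect terms. Writing $\Delta \triangleq \nabla F(\x) - \nabla F(\y)$ and $\vd \triangleq \x - \y$, the inner-product terms combine with coefficient $L_f + \sigma_f$, the term $\|\Delta\|^2$ survives with coefficient $1$, and the $\|\vd\|^2$ terms combine to $\sigma_f L_f$; dividing by $L_f + \sigma_f$ yields exactly the claimed inequality with $\mu_1 = \tfrac{1}{\sigma_f + L_f}$ and $\mu_2 = \tfrac{\sigma_f L_f}{\sigma_f + L_f}$. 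The degenerate case $L_f = \sigma_f$ should be treated separately: there $\nabla g$ is constant, so $\nabla F(\x) - \nabla F(\y) = \sigma_f(\x - \y)$ and the asserted inequality holds with equality.

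The step I expect to be the main obstacle is the first one — establishing $(L_f - \sigma_f)$-smoothness of $g$ without assuming $F$ twice differentiable. The bound obtained from the two hypotheses used crudely, $\|\nabla g(\x) - \nabla g(\y)\|^2 \le (L_f^2 - \sigma_f^2)\|\x - \y\|^2$, gives the constant $\sqrt{L_f^2 - \sigma_f^2} > L_f - \sigma_f$ and is not sharp enough; the correct route is the descent-lemma/convexity characterization of smoothness sketched above, which also requires phrasing Assumption~\ref{asenv3} in a form that supplies the descent lemma for $F$. Everything downstream of that is elementary algebra.
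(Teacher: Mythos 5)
Your proposal is correct and follows essentially the same route as the paper's Appendix~A proof: the same shift $g(\x) = F(\x) - \tfrac{\sigma_f}{2}\|\x\|^2$, co-coercivity applied to the convex, $(L_f-\sigma_f)$-smooth function $g$, and the same algebraic expansion of $\nabla g = \nabla F - \sigma_f\,\mathrm{id}$ to recover $\mu_1$ and $\mu_2$ (the paper certifies smoothness of $g$ via convexity of $\tfrac{L_f}{2}\|\x\|^2 - F(\x)$, which is equivalent to your descent-lemma argument). Your explicit treatment of the degenerate case $L_f = \sigma_f$ is a small refinement the paper omits, but it does not change the substance.
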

\begin{proof}
	Proof can be found in Appendix A.
\end{proof}

\begin{flushleft}
	Here onwards, $k$ denotes the time instant $\frac{kT_g}{2}.$ 
\end{flushleft}
The following lemma establishes the convergence of local estimate of global optimum by agent $i$, $\x_{i}(t)$ to a consensus value at a linear rate for any arbitrary initialisation of $\x(0)$. 

\begin{lemma} \label{lem4}
	Let, $\x(k)$ = $[\x_i(k)]_{1\leq i \leq N}^{T}$ be the sequence generated in (13e) and $\mathbf{\bar{u}} = \mathbf{u} \otimes \mathbf{I}_n $. Then, we have 
	\begin{align*}
	\|(\mathbf{1}_{N}^{T}\otimes\mathbf{\bar{u}})\x(k) - \x(k)\| = \|(\mathbf{A}\otimes \mathbf{I}_n-(\mathbf{1}_N^{T}\otimes\mathbf{\bar{u}}))^k\x(0)\|
	\end{align*}
\end{lemma}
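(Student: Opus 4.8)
The plan is to turn (18e) into a linear matrix recursion and then exploit two structural facts: that $J:=\mathbf{1}_N^{T}\otimes\bar{\mathbf{u}}$ is an idempotent that is absorbed by $M:=\mathbf{A}\otimes\mathbf{I}_n$ from both sides, and that the descent direction $\p(k)$ produced by (18d) is a consensus vector and is therefore annihilated by $M-J$. First I would stack (18e) across the $N$ agents. Because $T_g$ is chosen so that ${\x}_j\big(t\tfrac{T_g}{2}-\tau(t\tfrac{T_g}{2})\big)={\x}_j\big((t-1)\tfrac{T_g}{2}\big)$, the neighbour averaging in (18e) collapses to multiplication by the fixed row-stochastic matrix; writing $k$ for the instant $k\tfrac{T_g}{2}$, this reads $\x(k)=M\x(k-1)-\alpha_k\p(k)$, and unrolling from $\x(0)$ gives $\x(k)=M^{k}\x(0)-\sum_{l=1}^{k}\alpha_l M^{k-l}\p(l)$.

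Next I would record the Kronecker-product algebra. Row stochasticity of $\mathbf{A}$ gives $\mathbf{A}\mathbf{1}_N^{T}=\mathbf{1}_N^{T}$, and the defining property of $\mathbf{u}$ (with the normalisation $\mathbf{u}\mathbf{1}_N^{T}=1$) gives $\mathbf{u}\mathbf{A}=\mathbf{u}$. Since $J=(\mathbf{1}_N^{T}\mathbf{u})\otimes\mathbf{I}_n$ by associativity of $\otimes$, the mixed-product rule immediately yields $JM=MJ=J$ and $J^{2}=J$, from which a short induction gives $(M-J)^{k}=M^{k}-J$ for every $k\ge 1$. Separately, Lemma~1 (the analysis of iterate (18d)) shows that at the sampling instants $\p(l)=(\mathbf{u}\otimes\mathbf{I}_n)\big[\nabla f_i(l)\big]_{1\le i\le N}$ is replicated in all $N$ blocks, i.e. $\p(l)$ has the form $\mathbf{1}_N^{T}\otimes\mathbf{w}_l$; such a vector obeys $M\p(l)=\p(l)$ (rows of $\mathbf{A}$ sum to one) and $J\p(l)=\p(l)$ (since $\mathbf{u}\mathbf{1}_N^{T}=1$), hence $(M^{m}-J)\p(l)=0$ for all $m\ge 0$.

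To conclude, I would left-multiply the unrolled recursion by $J$ and use $J M^{m}=J$ to obtain $J\x(k)=J\x(0)-\sum_{l}\alpha_l J\p(l)$; subtracting $\x(k)$ and cancelling each term through $(M^{k-l}-J)\p(l)=0$ leaves $J\x(k)-\x(k)=(J-M^{k})\x(0)=-(M-J)^{k}\x(0)$, and taking Euclidean norms gives precisely the claimed identity. One then reads off the advertised linear rate, since the right-hand side decays geometrically whenever $\mathbf{A}-\mathbf{1}_N^{T}\mathbf{u}$ has spectral radius below one, but that estimate is not needed for the equality.

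The step I expect to be the main obstacle is the justification that $\p(l)$ is a genuine consensus vector at the sampling instants, and that this holds as an exact equality rather than only asymptotically: this is exactly the content of Lemma~1, so the care is in invoking it at the correct instants $l\tfrac{T_g}{2}$ so that every $\p$-contribution to $J\x(k)-\x(k)$ cancels on the nose. Once that is secured, the rest is routine Kronecker bookkeeping.
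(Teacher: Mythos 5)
Your proposal is correct and follows essentially the same route as the paper: both arguments use the fact (from Lemma 1) that the descent direction $\p(k)$ is a consensus vector fixed by $\mathbf{A}\otimes\mathbf{I}_n$ and by $\mathbf{1}_N^{T}\otimes\bar{\mathbf{u}}$, so the gradient terms cancel when forming $\x(k)-(\mathbf{1}_N^{T}\otimes\bar{\mathbf{u}})\x(k)$, leaving the homogeneous recursion driven by $\mathbf{A}\otimes\mathbf{I}_n-\mathbf{1}_N^{T}\otimes\bar{\mathbf{u}}$. The only difference is bookkeeping: the paper telescopes the identity step by step, whereas you unroll the recursion first and invoke $(M-J)^{k}=M^{k}-J$, which is an equivalent computation.
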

\begin{proof} 
	\begin{align*}
	\x(k) - (\mathbf{1}_{N}^{T}\otimes\mathbf{\bar{u}})\x(k) = (\mathbf{A}\otimes \mathbf{I}_n)\x(k-1) - \\
	\alpha_{k} (\mathbf{1}_{N}^{T}\otimes \mathbf{\bar{u}}) 
	[\nabla f_i(k)]_{1\leq i \leq N} \\
	-(\mathbf{1}_{N}^{T}\otimes\mathbf{\bar{u}})\x(k-1) + \alpha_{k} (\mathbf{1}_{N}^{T}\otimes \mathbf{\bar{u}})[\nabla f_i(k)]_{1\leq i \leq N} \\	
	= (\mathbf{A}\otimes \mathbf{I}_n - \mathbf{1}_{N}^{T}\otimes \mathbf{\bar{u}})\x(k-1)
	\end{align*}
	
	\begin{flushleft}
		\text{Repeating this equation for $k-1$ gives,} 
	\end{flushleft}
	\begin{align*}
	\x(k-1) = (\mathbf{A}\otimes \mathbf{I}_n - \mathbf{1}_{N}^{T}\otimes \mathbf{\bar{u}})\x(k-2) + \hspace{15mm}\\ (\mathbf{1}_{N}^{T}\otimes \mathbf{\bar{u}})\x(k-1)
	\end{align*}
	
	\begin{flushleft}
		\text{Substituting for $\x(k-1)$ above gives,} 
	\end{flushleft}
	\begin{align*}
	\x(k) - (\mathbf{1}_{N}^{T}\otimes\mathbf{u})\x(k) = (\mathbf{A}\otimes \mathbf{I}_n - \mathbf{1}_{N}^{T}\otimes \mathbf{\bar{u}}) \hspace{15mm} \\((\mathbf{A}\otimes \mathbf{I}_n - \mathbf{1}_{N}^{T}\otimes \mathbf{\bar{u}})\x(k-2) + (\mathbf{1}_{N}^{T}\otimes \mathbf{u})\x(k-1))\\
	= (\mathbf{A}\otimes \mathbf{I}_n - \mathbf{1}_{N}^{T}\otimes \mathbf{\bar{u}})^2\x(k-2)
	\end{align*}
	
	\begin{flushleft}
		\text{Continuing this way,} 
	\end{flushleft}
	\begin{align*}
	\x(k) - (\mathbf{1}_{N}^{T}\otimes\mathbf{\bar{u}})\x(k) = (\mathbf{A}\otimes \mathbf{I}_n - \mathbf{1}_{N}^{T}\otimes \mathbf{\bar{u}})^k\x(0) 
	\end{align*}
	\begin{flushleft}
		\text{which immediately gives the result in the lemma.} 
	\end{flushleft}
 \end{proof}
 	
	\begin{flushleft}
		\text{From Lemma 3, } 
	\end{flushleft}
	\begin{align*}
	\|\x(k) - (\mathbf{1}_{N}^{T}\otimes\mathbf{\bar{u}})\x(k)\|_2 = \|(\mathbf{A}\otimes \mathbf{I}_n - \mathbf{1}_{N}^{T}\otimes \mathbf{\bar{u}})^k\x(0)\|_2 \\
	< \|(\mathbf{A}\otimes \mathbf{I}_n - \mathbf{1}_{N}^{T}\otimes \mathbf{\bar{u}})^k\|\|\x(0)\|_2 \\
	= (\rho(\mathbf{A}\otimes \mathbf{I}_n - \mathbf{1}_{N}^{T}\otimes \mathbf{\bar{u}}))^{k}\|\x(0)\|_2
	\end{align*}
	\begin{flushleft}
		\text{It is known that, }\cite{horn2012matrix} 
	\end{flushleft}
	\begin{align*}
	\rho (\mathbf{A}\otimes \mathbf{I}_n - \mathbf{1}_{N}^{T}\otimes \mathbf{\bar{u}}) < 1. 
	\end{align*}
	\begin{flushleft}
		\text{Hence for any given $\varepsilon$ we have }$k_0$ {so that,}	
	\end{flushleft}  
	\begin{align*}
	\|\x(k) - (\mathbf{1}_{N}^{T}\otimes\mathbf{\bar{u}})\x(k)\|_2 < \varepsilon 
	\end{align*}
	\begin{flushleft}
		\text{for all $k$ $>$ $k_0$}. 
	\end{flushleft} 

Define $\hat{\p}(k)$ = $ 
\displaystyle\sum_{i=1}^{N} \frac{\mathbf{u}_i\nabla f_i(\mathbf{\bar{u}}\x(k))}{[\e_{i}(k)]_{i}}$ 
and $\p(k) = [\p_1(k) \hdots \p_N(k)]^T$. From Lemma~\ref{lem4} we have,  $k_0$ such that $\forall$ $k > k_0$  
\begin{align*} \label{15}
\|\mathbf{1}_{N}^{T} \otimes \hat{\p}(k) - \p(k)\|_2 \hspace{55mm} \\ \leq  
\sum_{i=1}^{N}\|\frac{\mathbf{u}_i\nabla f_i(\mathbf{\bar{u}}\x(k))}{[\e_{i}(k)]_{i}}- \frac{\mathbf{u}_i\nabla f_i(\x_i(k))}{[\e_{i}(k)]_{i}}\|_2 
\\ = \sum_{i=1}^{N}\frac{\mathbf{u}_i}{[\e_{i}(k)]_{i}}\|\nabla f_i(\mathbf{\bar{u}}\x(k))-\nabla f_i(\x_i(k))\| \nonumber \\
< \sum_{i=1}^{N}\frac{\mathbf{u}_il_i}{[\e_{i}(k)]_{i}}\|\mathbf{\bar{u}}\x(k)-\x_i(k)\|
< L_h\epsilon \sum_{i=1}^{N}\frac{\mathbf{u}_i}{[\e_{i}(k)]_{i}} \approx NL_h\varepsilon \nonumber \\ \text{for "large" $k$.} \tag{19}
\end{align*} 

We now consolidate the results in Lemma 2 and Lemma 3 to present a contraction result on the distance between the weighted average of local estimates of global optimum and the global optimum, $\x^{*}$ in Theorem 1.  
\begin{theorem}\label{thm:errorinstrconv}
	Let Assumptions~\ref{asenv1}-\ref{asenv3} hold. Let $\x(k)$ = $[\x_i(k)]_{1\leq i \leq N}^{T}$  be the sequence of estimates generated by 14(e). Let $L_h$ = $\max_{1\leq i \leq N}l_i$ and $\mu_{1i}$, $\mu_{2i}$ be the corresponding constants for $f_{i}$ as defined in Lemma 2.  
	Then $\exists$ $k_1$ such that $\forall k \geq k_1$ we have,
	\begin{align*}
	\hspace{-0.1in} \big \|\bar{\mathbf{u}}\x(k+1) - \x^* \big\|^{2}  \leq
	& a_{1}(k) \big \|\bar{\mathbf{u}}\x(k) - \x^* \big \|^{2} + O(\varepsilon).
	\end{align*}
	where $a_{1}(k)$ = $4 + 4N\displaystyle\sum_{i=1}^{N} \bigg(\frac{l_{i}\alpha_{k}\mathbf{u}_i}{[\e_{i}(k)]_{i}}\bigg)^{2}-\frac{8\alpha_{k}\mathbf{u}_{i}l_i}{[\e_{i}(k)]_{i}} $
\end{theorem}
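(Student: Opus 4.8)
The plan is to collapse the $Nn$-dimensional iteration (18e) onto the single $\mathbb{R}^{n}$-valued quantity $\bar{\mathbf{u}}\x(k)=(\mathbf{u}\otimes\mathbf{I}_n)\x(k)$ and to recognise the resulting recursion as an inexact gradient step on $NF$. First I would use the $T_g$-choice, so that the delayed neighbour values $\x_j\!\big(t\tfrac{T_g}{2}-\tau(\cdot)\big)$ equal $\x_j\!\big((t-1)\tfrac{T_g}{2}\big)$, to stack (18e) as $\x(k+1)=(\mathbf{A}\otimes\mathbf{I}_n)\x(k)-\alpha_k\p(k)$; left-multiplying by $\bar{\mathbf{u}}$ and invoking the identity $\bar{\mathbf{u}}(\mathbf{A}\otimes\mathbf{I}_n)=\bar{\mathbf{u}}$ already used after (11) annihilates the mixing term and leaves $\bar{\mathbf{u}}\x(k+1)=\bar{\mathbf{u}}\x(k)-\alpha_k\bar{\mathbf{u}}\p(k)$. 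By Lemma~1 every block $\p_i(k)$ equals the common vector $\sum_{j}c_j(k)\nabla f_j(\x_j(k))$ with $c_j(k):=\mathbf{u}_j/[\e_j(k)]_j$, so, normalising $\mathbf{u}$ so that $\sum_j\mathbf{u}_j=1$, we obtain $\bar{\mathbf{u}}\p(k)=\sum_{j=1}^{N}c_j(k)\nabla f_j(\x_j(k))$.

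Next I would rewrite this as a step anchored at $\x^{*}$, folding two error sources into an $O(\varepsilon)$ remainder. The consensus error $\x_j(k)-\bar{\mathbf{u}}\x(k)$ is controlled by inequality (19) -- itself a consequence of Lemma~\ref{lem4} and the spectral bound $\rho(\mathbf{A}\otimes\mathbf{I}_n-\mathbf{1}_N^{T}\otimes\bar{\mathbf{u}})<1$ -- which, for $k$ past some $k_0$, gives $\|\sum_j c_j(k)(\nabla f_j(\x_j(k))-\nabla f_j(\bar{\mathbf{u}}\x(k)))\|=O(\varepsilon)$ after using $l_j$-smoothness. The eigenvector-estimation error is controlled by observing that (18a) drives $\e_j(k)$ geometrically to the left eigenvector $\mathbf{u}$, so $c_j(k)\to1$. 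Choosing $k_1\ge k_0$ large enough that both residuals are below $\varepsilon$, we have, for all $k\ge k_1$,
\[
\bar{\mathbf{u}}\x(k+1)-\x^{*}=\big(\bar{\mathbf{u}}\x(k)-\x^{*}\big)-\alpha_k\sum_{j=1}^{N}c_j(k)\,\nabla f_j(\bar{\mathbf{u}}\x(k))+\mathbf{E}_k,\qquad\|\mathbf{E}_k\|=O(\varepsilon),
\]
where $\mathbf{E}_k$ collects the consensus residual.

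Then I would square. Writing $h_j:=\nabla f_j(\bar{\mathbf{u}}\x(k))-\nabla f_j(\x^{*})$ and using $\sum_j c_j(k)\nabla f_j(\bar{\mathbf{u}}\x(k))=\sum_j c_j(k)h_j+\sum_j c_j(k)\nabla f_j(\x^{*})$ with the last sum $O(\varepsilon)$ (since $c_j(k)\to1$ and $\sum_j\nabla f_j(\x^{*})=0$), I would apply $\|a+b\|^{2}\le2\|a\|^{2}+2\|b\|^{2}$ twice -- once to peel off $\mathbf{E}_k$, once to peel off that $O(\varepsilon)$ term -- to get $\|\bar{\mathbf{u}}\x(k+1)-\x^{*}\|^{2}\le4\|(\bar{\mathbf{u}}\x(k)-\x^{*})-\alpha_k\sum_j c_j(k)h_j\|^{2}+O(\varepsilon)$, Assumption~\ref{asenv2} (bounded subgradients) keeping the mixed inner products against $O(\varepsilon)$ vectors also $O(\varepsilon)$; the accumulated factor $4$ is the leading term of $a_1(k)$. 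Expanding the remaining square, the cross term is bounded below via the extension of co-coercivity, Lemma~\ref{lem:strongconv}, applied to each $f_j$, which produces the $O(\alpha_k)$ contribution $-8\alpha_k\sum_j c_j(k)\mu_{2j}$ (written $-8\alpha_k\sum_j\mathbf{u}_jl_j/[\e_j(k)]_j$ in the statement), and the quadratic term obeys $\alpha_k^{2}\|\sum_j c_j(k)h_j\|^{2}\le N\alpha_k^{2}\sum_j c_j(k)^{2}\|h_j\|^{2}\le N\alpha_k^{2}\sum_j c_j(k)^{2}l_j^{2}\|\bar{\mathbf{u}}\x(k)-\x^{*}\|^{2}$ by Cauchy--Schwarz and $l_j$-smoothness. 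Multiplying through by $4$, the coefficient of $\|\bar{\mathbf{u}}\x(k)-\x^{*}\|^{2}$ is exactly $a_1(k)$.

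The obstacle I expect is the uniform-in-$k$ bookkeeping of the error terms rather than the algebra: one must check that the consensus residual from (19), the eigenvector residual from (18a), and the residual arising from the indexing of $\p$ between (18d) and (18e) are simultaneously $O(\varepsilon)$ for every $k\ge k_1$, and that their cross terms with the bounded quantities $\bar{\mathbf{u}}\x(k)-\x^{*}$ and $\sum_j c_j(k)h_j$ do not accumulate as $k$ grows. The expansion itself is the standard inexact-gradient-descent computation; the constants $4$ and $8$ merely record the deliberately lossy use of $\|a+b\|^{2}\le2\|a\|^{2}+2\|b\|^{2}$ and of $l_j$-smoothness in place of co-coercivity, in contrast to the tighter coefficient $1-2\alpha_k\sum_j c_j(k)\mu_{2j}+N\alpha_k^{2}\sum_j c_j(k)^{2}l_j^{2}$ that an exact expansion would yield.
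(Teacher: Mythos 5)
Your route is essentially the paper's: collapse (18e) with $\bar{\mathbf{u}}(\mathbf{A}\otimes\mathbf{I}_n)=\bar{\mathbf{u}}$, add and subtract $\hat{\p}(k)=\sum_i \frac{\mathbf{u}_i\nabla f_i(\bar{\mathbf{u}}\x(k))}{[\e_i(k)]_i}$, absorb the consensus residual through (19) and the eigenvector residual through the optimality condition $\sum_i\nabla f_i(\x^*)=0$ together with the geometric convergence of $[\e_i(k)]_i$ to $\mathbf{u}_i$, apply the splitting $\|a+b\|^2\le 2\|a\|^2+2\|b\|^2$ twice to produce the factor $4$, bound the quadratic term by Cauchy--Schwarz and $l_i$-smoothness to get $4N\sum_i\big(\tfrac{\alpha_k\mathbf{u}_il_i}{[\e_i(k)]_i}\big)^2$, and handle the cross term with Lemma~\ref{lem:strongconv}. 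All of this matches the paper step for step (the paper reaches the same three pieces by pushing the $\tfrac1N$-splitting inside the norm before expanding, which is only cosmetically different).

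The one genuine discrepancy is the linear-in-$\alpha_k$ coefficient. Your cross-term bound keeps only the $\mu_{2i}\|\bar{\mathbf{u}}\x(k)-\x^*\|^2$ part of Lemma~\ref{lem:strongconv} and yields $-8\alpha_k\sum_i \tfrac{\mathbf{u}_i}{[\e_i(k)]_i}\mu_{2i}$, and your parenthetical identification of this with the stated $-8\alpha_k\sum_i\tfrac{\mathbf{u}_il_i}{[\e_i(k)]_i}$ is false, since $\mu_{2i}=\tfrac{\sigma_il_i}{\sigma_i+l_i}<l_i$. So as written you prove a contraction with a strictly larger $a_1(k)$ than the one in the statement, and the difference is not harmless downstream: Theorem~\ref{Theorem3} and Assumption~\ref{asenv5} tune $\alpha_k$ against $A_k,B_k$ built from the $l_i$'s. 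The paper gets $l_i$ by also retaining the $\mu_{1i}\|\nabla f_i(\bar{\mathbf{u}}\x(k))-\nabla f_i(\x^*)\|^2$ term, replacing that squared gradient difference by $l_i^2\|\bar{\mathbf{u}}\x(k)-\x^*\|^2$ (asserted to follow from smoothness), and using the exact identity $\mu_{1i}l_i^2+\mu_{2i}=l_i$; note, however, that this replacement needs a lower bound on the gradient difference (the term carries a negative sign), while smoothness supplies only the upper bound, so the paper's own passage from (25) to (26) is the step you would have to either reproduce or repair (e.g., via strong convexity, which gives $\sigma_i^2$ rather than $l_i^2$) if you want the literal $a_1(k)$ of the statement rather than your weaker $\mu_{2i}$-version.
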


\begin{proof}
	\begin{align*} \label{16}
	\|\bar{\mathbf{u}}{\x}(k+1) - \x^*\|^2 = \|\bar{\mathbf{u}}{\x}(k) -  \bar{\mathbf{u}}\alpha_{k} [\nabla f_i(k)]_{1\leq i\leq N} - \x^*\|^2  \\
	= \|\bar{\mathbf{u}}{\x}(k) -  \bar{\mathbf{u}}\alpha_{k} [\nabla f_i(k)]_{1\leq i\leq N} +\alpha_{k}{\hat{\p}(k)} - \alpha_{k}{\hat{\p}(k)} - \x^*\|^2 \\
	\leq 2\|\bar{\mathbf{u}}\x(k) - \x^{*} -\alpha_{k}\hat{\p}(k) \|^2 \hspace{40mm} \\ +2\alpha_{k}^{2} \|\sum_{i=1}^{N} \frac{\mathbf{u}_i\nabla f_i(\x_{i}(k))}{[\e_{i}(k)]_{i}} - {\hat{\p}(k)}\|^2 \tag{20}
	\end{align*}
	\begin{flushleft}
		From (19) there is $k_0$ so that $\forall k > k_0$ the second term is bounded by $\varepsilon$. The first term in $\eqref{16}$ can be written as, 
	\end{flushleft}
	\begin{align*} \label{17}
	2\|(\bar{\mathbf{u}}\x(k) - \x^*) -\alpha_{k} \sum_{i=1}^{N} \frac{\mathbf{u}_i}{[\e_{i}(k)]_{i}} (\nabla f_i(\mathbf{\bar{u}}\x(k))-\nabla f_i(\x^{*})) - \\ \sum_{i=1}^{N} \frac{\alpha_{k} \mathbf{u}_i}{[\e_{i}(k)]_{i}} \nabla f_i(\x^{*})\|^2  \tag{21}
	\end{align*}
	\vspace{-6mm}
	\begin{flushleft}
	From optimality condition, $\displaystyle\sum_{i=1}^{N}\nabla f_i(\x^{*}) = 0.$ Hence (21) can be re-written as,
	\end{flushleft} 
    \vspace{-2mm}
	\begin{align*} \label{18}
	 2\|(\bar{\mathbf{u}}\x(k) - \x^*) -\alpha_{k} \sum_{i=1}^{N} \frac{\mathbf{u}_i}{[\e_{i}(k)]_{i}} (\nabla f_i(\mathbf{\bar{u}}\x(k))-\nabla f_i(\x^{*})) - \\ \sum_{i=1}^{N} \frac{\alpha_{k} \mathbf{u}_i}{[\e_{i}(k)]_{i}} \nabla f_i(\x^{*}) + \alpha_{k} \sum_{i=1}^{N}\nabla f_i(\x^{*}) \|^2 \tag{22}
	\end{align*}
	=
	\vspace{-3mm}
	\begin{align*} \label{19}
		2\|(\bar{\mathbf{u}}\x(k) - \x^*) -\alpha_{k} \sum_{i=1}^{N} \frac{\mathbf{u}_i}{[\e_{i}(k)]_{i}} (\nabla f_i(\mathbf{\bar{u}}\x(k))-\nabla f_i(\x^{*})) - \\ \sum_{i=1}^{N} \alpha_{k} \nabla f_i(\x^{*}) \bigg(1- \frac{\mathbf{u}_i}{[\e_{i}(k)]_{i}}\bigg)\|^{2} \tag{23}
	\end{align*}
	$\leq$
	\begin{align*} \label{20}
	4\|(\bar{\mathbf{u}}\x(k) - \x^*) +\alpha_{k} \sum_{i=1}^{N} \frac{\mathbf{u}_i}{[\e_{i}(k)]_{i}} (\nabla f_i(\x^*)-\nabla f_i(\mathbf{\bar{u}}\x(k)))\|^2  \\+ 4\|\sum_{i=1}^{N} \alpha_{k}\nabla f_i(\x^{*}) \bigg(1- \frac{\mathbf{u}_i}{[\e_{i}(k)]_{i}}\bigg)\|^2 \tag{24}
	\end{align*}
	\begin{flushleft}
		The first term in $\eqref{20}$ can be bounded as, 
	\end{flushleft}
	\begin{align*} \label{22}
	4\|(\bar{\mathbf{u}}\x(k) - \x^*) -\alpha_{k} \sum_{i=1}^{N} \frac{\mathbf{u}_i}{[\e_{i}(k)]_{i}} (\nabla f_i(\mathbf{\bar{u}}\x(k))-\nabla f_i(\x^{*}))\|^2 
	\hspace{30mm} \end{align*} 
	=
	\begin{align*}
	4\|\sum_{i=1}^{N}\frac{1}{N}(\bar{\mathbf{u}}\x(k) - \x^*) - \frac{\alpha_{k}\mathbf{u}_i}{[\e_{i}(k)]_{i}} (\nabla f_i(\mathbf{\bar{u}}\x(k))-\nabla f_i(\x^{*}))\|^2 \hspace{34mm} \end{align*}
	$\leq$
	\begin{align*}
	 4N\sum_{i=1}^{N}\|\frac{(\bar{\mathbf{u}}\x(k) - \x^*)}{N} - \frac{\alpha_{k}\mathbf{u}_i}{[\e_{i}(k)]_{i}} (\nabla f_i(\mathbf{\bar{u}}\x(k))-\nabla f_i(\x^{*}))\|^2 \hspace{31mm} \end{align*}
	 =
	\begin{align*}
	4N\sum_{i=1}^{N}\frac{1}{N^2}\|\bar{\mathbf{u}}\x(k) - \x^*\|^{2} - \frac{2\alpha_{k}\mathbf{u}_i}{N[\e_{i}(k)]_{i}}\langle \bar{\mathbf{u}}\x(k) - \x^*, \hspace{45mm} \\ \nabla f_i(\mathbf{\bar{u}}\x(k))-\nabla f_i(\x^{*})\rangle \hspace{40mm} \\ + 
	\bigg(\frac{\alpha_{k}\mathbf{u}_i}{[\e_{i}(k)]_{i}}\bigg)^{2} \|\nabla f_i(\mathbf{\bar{u}}\x(k))-\nabla f_i(\x^{*})\|^{2}\hspace{60mm} 
	\end{align*}
	$\leq$
	\begin{align*}
	 4N\sum_{i=1}^{N}\frac{1}{N^2} + \bigg(\frac{\alpha_{k}\mathbf{u}_i l_i}{[\e_{i}(k)]_{i}}\bigg)^{2}-  \frac{2\alpha_{k}\mathbf{u}_i}{N[\e_{i}(k)]_{i}}(\mu_{2i}\|\bar{\mathbf{u}}\x(k) - \x^*\|^2 +  \\ \mu_{1i}\|\nabla f_i(\mathbf{\bar{u}}\x(k))-\nabla f_i(\x^{*})\|^2) \end{align*} = \begin{align*}
	  \bigg(\sum_{i=1}^{N}\frac{4}{N} + 4N\bigg(\frac{\alpha_{k}\mathbf{u}_i l_i}{[\e_{i}(k)]_{i}}\bigg)^{2} - \frac{8\alpha_{k}\mathbf{u}_i}{[\e_{i}(k)]_{i}}(\mu_{1i}l_i^{2}+\mu_{2i})\bigg) \hspace{10mm}\\
	\|\bar{\mathbf{u}}\x(k) - \x^*\|^{2} \tag{25}\\
	 = \bigg(\sum_{i=1}^{N}\frac{4}{N} + 4N\bigg(\frac{\alpha_{k}\mathbf{u}_i l_i}{[\e_{i}(k)]_{i}}\bigg)^{2} - \frac{8\alpha_{k}\mathbf{u}_i}{[\e_{i}(k)]_{i}}l_i \bigg)
	\|\bar{\mathbf{u}}\x(k) - \x^*\|^{2} \tag{26}
	\end{align*}
	\begin{flushleft}
		where (25) is obtained by invoking Lemma 2 and (26) is a consequence of smoothness of $f_i$. 
	\end{flushleft}	
	\begin{flushleft}
		 The second term in $\eqref{20}$ is bounded by, \cite{mai2016distributed}
	\end{flushleft}
	\begin{align*} \label{23}
	 4\left\Vert \sum_{i=1}^{N} \alpha_{k}\nabla f_i(\x^{*}) \bigg(1- \frac{\mathbf{u}_i}{[\e_{i}(k)]_{i}}\bigg) \right\Vert^2 \leq C\lambda^{k} \sum_{i=1}^{N} \frac{\nabla f_i(\x^{*})}{[\e_{i}(k)]_{i}}  \tag{27}
	\end{align*}
	
	\begin{flushleft}
		where $C>0$ is a constant and $0 <\lambda< 1$.
	\end{flushleft}
	\begin{flushleft}
	Thus, there is $k_2$ so that $\forall k > k_2$ the R.H.S. in $\eqref{23}$ is less than $\varepsilon$. Choosing $k_1$ = $\max\{k_{0},k_{2}\}$ and combining $\eqref{16}$, $\eqref{20}$, $\eqref{22}$, $\eqref{23}$ we get the result.
    \end{flushleft}
\end{proof}

Before we give a convergence proof for the proposed algorithm in (18) we state an additional assumption on the smoothness parameters of the nodal functions. The assumption shall aid in our proof as will be evident in the subsequent analysis.   
\begin{assumption}\label{asenv5}
	For each $i$ we have, $\frac{\big(\displaystyle\sum l_{i}\big)^{2}}{N\displaystyle\sum l_{i}^{2}} > \frac{3}{4}$. 
\end{assumption}
We now present a proof of the main result of the paper. The proof proceeds by imposing a condition on the contraction result obtained in Theorem 1 to obtain a viable step size for which convergence within an $\epsilon$-ball around the global optimum is guaranteed. 

\begin{theorem} \label{Theorem3}
	Let Assumptions~\ref{asenv1}-\ref{asenv3} and \ref{asenv5} hold. Let $\{x_{i}(k)\}_{1\leq i\leq N}$ be the sequence generated in (13d). Then $\forall i$ there is $k_3$ such that for each k $\geq$ $k_3$,
	\begin{align*}
	\|\x_{i}(k) - \x^{*}\| < O(\varepsilon) 
	\end{align*}  
	\begin{flushleft}
		under appropriate choice of step size $\alpha_{k}$.
	\end{flushleft}
\end{theorem}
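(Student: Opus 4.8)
The plan is to promote the per-iteration bound of Theorem~\ref{thm:errorinstrconv} to a genuine contraction by choosing a suitable constant step size, iterate it so that $\bar{\mathbf{u}}\x(k)$ enters an $O(\varepsilon)$ ball around $\x^{*}$, and then transfer this to each agent via the consensus estimate of Lemma~\ref{lem4}. First I would note that the eigenvector update (18a), under the initialisation $\e_{j}(t)=\e_{j}$ for $t\le 0$, is the row-stochastic eigenvector recursion already used by FROST (cf.\ (8)): stacking the $\e_{i}(k)$ as rows of a matrix $E_{k}$ gives $E_{k}=\mathbf{A}E_{k-1}$ with $E_{0}=\mathbf{I}_{N}$, so by Assumption~\ref{asenv1} and row-stochasticity of $\mathbf{A}$ we have $[\e_{i}(k)]_{i}=[\mathbf{A}^{k}]_{ii}\to\mathbf{u}_{i}$ geometrically fast. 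Hence $\beta_{i}(k):=\mathbf{u}_{i}/[\e_{i}(k)]_{i}\to 1$ geometrically, and, taking $\alpha_{k}\equiv\alpha$, the factor in Theorem~\ref{thm:errorinstrconv} obeys
\begin{align*}
a_{1}(k)&=4+\sum_{i=1}^{N}\Big(4N\alpha^{2}l_{i}^{2}\beta_{i}(k)^{2}-8\alpha l_{i}\beta_{i}(k)\Big)\\
&\longrightarrow\; a_{1}^{\infty}(\alpha):=4+4N\alpha^{2}\sum_{i=1}^{N}l_{i}^{2}-8\alpha\sum_{i=1}^{N}l_{i}\qquad(k\to\infty).
\end{align*}

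The crux is the choice of $\alpha$ making $a_{1}^{\infty}(\alpha)<1$, i.e.\ $4N\alpha^{2}\sum_{i}l_{i}^{2}-8\alpha\sum_{i}l_{i}+3<0$. Regarded as a quadratic in $\alpha$, this inequality has a non-empty open solution set if and only if its discriminant $64\big(\sum_{i}l_{i}\big)^{2}-48N\sum_{i}l_{i}^{2}$ is positive; dividing this condition by $64N\sum_{i}l_{i}^{2}$ gives exactly Assumption~\ref{asenv5}. I would therefore fix $\alpha$ strictly between the two positive roots of $4N\alpha^{2}\sum_{i}l_{i}^{2}-8\alpha\sum_{i}l_{i}+3=0$, so that $a_{1}^{\infty}(\alpha)<1$; since $\beta_{i}(k)\to 1$, continuity then supplies a $k_{1}'$ with $a_{1}(k)\le c:=\tfrac12\big(1+a_{1}^{\infty}(\alpha)\big)<1$ for every $k\ge k_{1}'$.

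With the contraction factor uniformly below one, I would unroll Theorem~\ref{thm:errorinstrconv}. Writing $\widetilde{k}$ for the larger of $k_{1}'$ and the threshold $k_{1}$ appearing in that theorem, one gets for all $k\ge\widetilde{k}$
\begin{align*}
\|\bar{\mathbf{u}}\x(k)-\x^{*}\|^{2}\;\le\;c^{\,k-\widetilde{k}}\,\|\bar{\mathbf{u}}\x(\widetilde{k})-\x^{*}\|^{2}+\frac{O(\varepsilon)}{1-c},
\end{align*}
and, $1-c$ being a fixed constant, $\|\bar{\mathbf{u}}\x(k)-\x^{*}\|^{2}=O(\varepsilon)$ for all large $k$, hence $\|\bar{\mathbf{u}}\x(k)-\x^{*}\|=O(\varepsilon)$ once the square root is absorbed into the arbitrary parameter $\varepsilon$. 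Finally, Lemma~\ref{lem4} yields $\|\x_{i}(k)-\bar{\mathbf{u}}\x(k)\|\le\rho\big(\mathbf{A}\otimes\mathbf{I}_{n}-\mathbf{1}_{N}^{\top}\otimes\bar{\mathbf{u}}\big)^{k}\|\x(0)\|<\varepsilon$ for large $k$, and the triangle inequality
\begin{align*}
\|\x_{i}(k)-\x^{*}\|\;\le\;\|\x_{i}(k)-\bar{\mathbf{u}}\x(k)\|+\|\bar{\mathbf{u}}\x(k)-\x^{*}\|\;<\;O(\varepsilon)
\end{align*}
then holds for every $k\ge k_{3}$, with $k_{3}$ the largest of the thresholds involved.

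The step I expect to be the main obstacle is making the contraction uniform in $k$ — fixing the step size once and for all so that $a_{1}(k)\le c<1$ for every $k\ge k_{1}'$ in spite of the perturbation $\beta_{i}(k)\ne 1$ at finite $k$ — together with the bookkeeping of the several $O(\varepsilon)$ residuals (the consensus bound (19), the $\lambda^{k}$ tail inside Theorem~\ref{thm:errorinstrconv}, and the geometric tails of $\beta_{i}(k)-1$ and of Lemma~\ref{lem4}), verifying that none of them is inflated beyond $O(\varepsilon)$ after division by $1-c$ or after the passage from the squared norm to the norm. Assumption~\ref{asenv5} is precisely the condition keeping the admissible step-size interval non-empty, so it is the structural heart of the statement; the remainder is routine telescoping and triangle-inequality estimation.
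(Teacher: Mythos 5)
Your proposal is correct and follows essentially the same route as the paper: Assumption~\ref{asenv5} is used to guarantee a step size for which the quadratic factor $a_{1}(k)$ of Theorem~\ref{thm:errorinstrconv} drops below one (your discriminant condition is equivalent to the paper's computation of the minimum value $4-\tfrac{B_k^2}{4A_k}$ at the vertex), and the contraction is then iterated and combined with the consensus estimate of Lemma~\ref{lem4}. Your treatment is in fact slightly more careful than the paper's on points it glosses over — the uniform bound $a_{1}(k)\le c<1$ via $\mathbf{u}_i/[\e_i(k)]_i\to 1$, the geometric-series accounting $O(\varepsilon)/(1-c)$ of the residuals, starting the telescoping at $k\ge k_1$, and the explicit final triangle inequality — but these are refinements of the same argument rather than a different proof.
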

\begin{proof}
	\begin{flushleft}
		We must choose $\alpha_{k}$ so that $|a_{i}(k)| < 1$ for each $k>0$. Observe that $a_{i}(k)$ is a quadratic in $\alpha_k$. Thus, rearranging $a_{1}(k)$ in Theorem 1 it is desired that, 
	\end{flushleft}
	\begin{align} \label{24}
	0 < A_{k}\alpha_k^{2} + B_{k}\alpha_{k} + 4 < 1 \tag{28}
	\end{align}
	\begin{flushleft}
	where, \\ $A_{k} = 4\displaystyle\sum_{i=1}^{N}Nl_{i}^{2}\bigg(\frac{\mathbf{u}_{i}}{[\e_{i}(k)]_{i}}\bigg)^{2}$ , $B_{k} = \displaystyle\sum_{i=1}^{N}-8\frac{\mathbf{u}_il_i}{[\e_{i}(k)]_{i}}$ 
\end{flushleft}
\begin{flushleft}
	The minimum value of the expression above is $4 -\frac{B_{k}^{2}}{4A_{k}}$, which is obtained at $\alpha_{k} = \frac{-B_{k}}{2A_{k}}$. By Assumption \ref{asenv5}, we have, $\frac{\big(\displaystyle\sum l_{i}\big)^{2}}{N\displaystyle\sum l_{i}^{2}} > \frac{3}{4}$ and from Cauchy-Schwarz Inequality we have, $\frac{\big(\displaystyle\sum l_{i}\big)^{2}}{N\displaystyle\sum l_{i}^{2}} < 1$. Thus,
	$ 0 < 4 - \frac{64\big(\displaystyle\sum l_{i}\big)^{2}}{16N\displaystyle\sum l_{i}^{2}}$ = $4 -\frac{B_{k}^{2}}{4A_{k}} < 1.$	\\ 
\end{flushleft}
\begin{flushleft}
	Reproducing the relation from Theorem~\ref{thm:errorinstrconv} for $k = 0$, we have,
\end{flushleft}
	\begin{align*}
	 \big \|\bar{\mathbf{u}}\x(1) - \x^* \big\|^{2}  \leq
		& a_{1}(0) \big \|\bar{\mathbf{u}}\x(0) - \x^* \big \|^{2} + O(\varepsilon).	\tag{29}
	\end{align*}
	\begin{flushleft}
		and for $k=1$, 
	\end{flushleft}
	\vspace{-3mm}
	\begin{align*}
	\big \|\bar{\mathbf{u}}\x(2) - \x^* \big\|^{2}  \leq
	& a_{1}(1) \big \|\bar{\mathbf{u}}\x(1) - \x^* \big \|^{2} + O(\varepsilon).	\tag{30}
	\end{align*}
	\begin{flushleft}
		Substituting (29) in (30) gives, 
	\end{flushleft}
	\vspace{-3mm}
	\begin{align*}
	\big \|\bar{\mathbf{u}}\x(2) - \x^* \big\|^{2}  \leq
	& a_{1}(1)a_{1}(0) \big \|\bar{\mathbf{u}}\x(0) - \x^* \big \|^{2} + O(\varepsilon).	\tag{31}
	\end{align*}
	\begin{flushleft}
		Generalizing, 
	\end{flushleft}
	\vspace{-5mm}
	\begin{align*}	
	\big \|\bar{\mathbf{u}}\x(k) - \x^* \big\|^{2}  \leq
	& \prod_{s=0}^{k-1}a_{1}(s)\big \|\bar{\mathbf{u}}\x(0) - \x^* \big \|^{2} + O(\varepsilon).	\tag{32}
	\end{align*}
	\begin{flushleft}
	Since $a_{1}(s) < 1$ for each $s$ by our choice of $\alpha_{s}$, $\exists$ $k_{3}$ such that $\forall$ $k > k_{3}$ $\prod_{s=0}^{k-1}a_{1}(s) < \varepsilon$, which immediately gives the result in the theorem.     	
	\end{flushleft}
\end{proof}

\section{Simulations}
For the purpose of simulations we analyse two different cases. It is easy to verify that the functions assigned to nodes in both the scenarios are consistent with the assumptions \ref{asenv1}-\ref{asenv4}. In the first case $f_i(x)$ is assigned to be $(x+i)^2$ $\forall$ $i$. In this case the local minimas are regularly spaced and the global minima coincides with the average of local minimas. We observe that in accordance with our theoretical analysis, convergence to global minima for algorithm (18) is observed in the figures 1 and 2. 
\begin{figure}[h]
	\centering
	\includegraphics[width=0.5\textwidth]{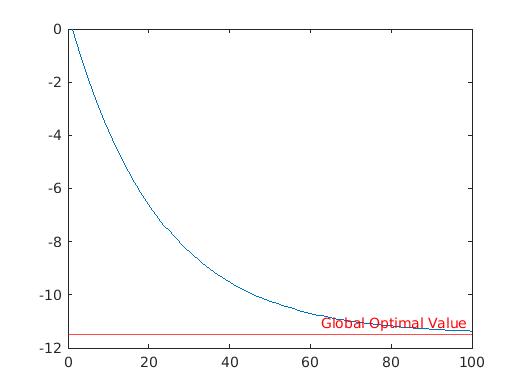}
	\caption{Convergence of agent 1 to global minima located at -11.5. A total of 22 agents are connected through a directed graph topology.}
\end{figure}
\begin{figure}[h]
	\centering
	\includegraphics[width=0.5\textwidth]{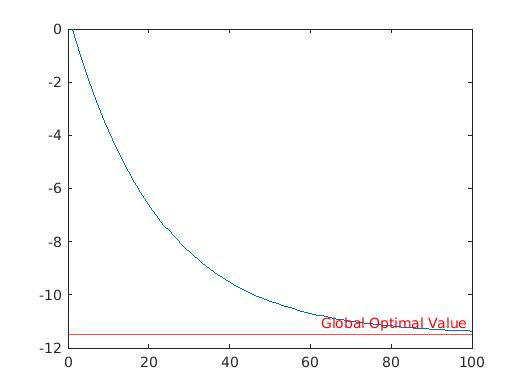}
	\caption{Convergence of agent 10 to global minima located at -11.5.}
\end{figure}

In the second scenario we force the minima of the first local function to become an outlier. Hence our choice of local functions is as follows: \\
\[
f_i(x)= 
\begin{cases}
(x+i)^2,& \text{if } i\neq 1\\
(x+100)^2,              & \text{otherwise}
\end{cases}
\] 
The convergence to global minima in this case for agents 1 and 10 can be observed in Figures 3 and 4 respectively.  
\begin{figure}[h]
	\centering
	\includegraphics[width=0.5\textwidth]{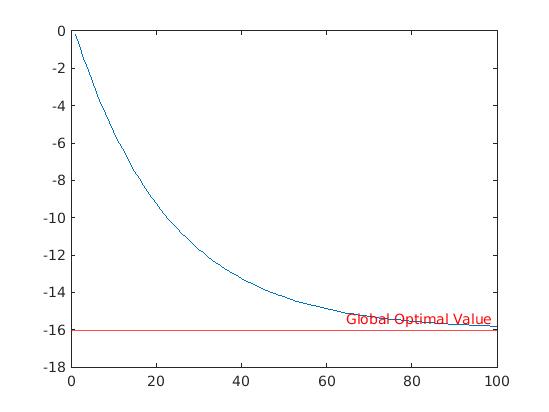}
	\caption{Convergence of agent 1 to global minima located at -16.045 approx.}
\end{figure}
\begin{figure}[h]
	\centering
	\includegraphics[width=0.5\textwidth]{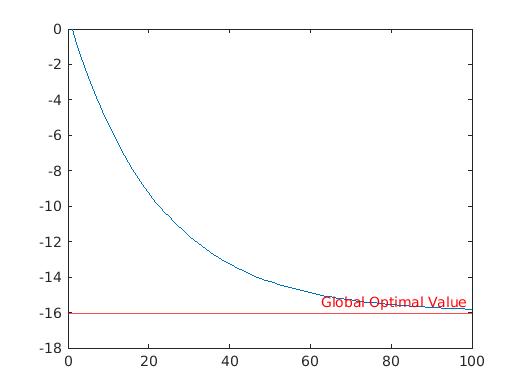}
	\caption{Convergence of agent 10 to global minima located at -16.045 approx.}
\end{figure}
We demonstrate the utility of the iterates in (18a-e) by comparing the average tracker in (18b) and (18c) with a naive scheme where the gradients are being communicated by a node to it's out neighbours, followed by a direct averaging. In our system of 22 nodes, the nodes have been inititialised as $r_{i}(1)= \frac{\nabla f_{i}(\x_{i}(0))}{e_{ii}(0)}$. The nodes are to track the average of these quantities weighted by elements of the first left eigenvector of the adjacency matrix at every $\frac{T_g}{2}$ time steps. The maximum allowed delay between nodes is 157 units which corresponds to the choice $\kappa = 0.01$. Under naive averaging each node will have to wait for 157 x 21 = 3297 time units in the worst case for obtaining the local gradient information from all the nodes. However, under average tracking we see that the nodes approach the weighted average well within 3000 time units. This is summarised in Fig. 5. Additionally, we compare the nature of convergence to the optimum with average gradient tracking and with naive averaging in Fig. 6.    
\begin{figure}[h]
	\centering
	\includegraphics[width=0.5\textwidth]{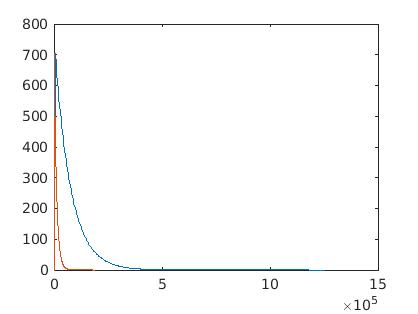}
	\caption{The red graph shows the iterates of the average tracker converging to the average of gradients for Node 1. The blue graph depicts the convergence for the naive scheme.}
\end{figure}

\begin{figure}[h]
	\centering
	\includegraphics[width=0.5\textwidth]{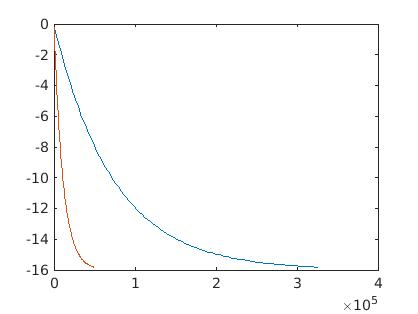}
	\caption{The red graph shows the iterates in the average tracker case converging to the global optimum for Node 1. The blue graph depicts the convergence for the naive scheme.}
\end{figure}
\section{Conclusion}
In this paper, we consider distributed optimization for graphs with row-stochastic weights. Most of the existing algorithms are based on synchronised communication among agents, which may be infeasible to implement in many practical scenarios since communication delays are inevitable. We propose an algorithm inspired by FROST and show that it converges under appropriate choice of step sizes. Our algorithm inherits the convergence properties of FROST while being robust to time varying communication delays. Simulation results substantiate our theoretical claims. Several possibilities of future work include replicating our algorithm in an adversarial setting while ensuring convergence to the optimum.

\bibliographystyle{ieeetran}
\bibliography{scibib} 

\section{APPENDIX A}
\begin{proof}
	\begin{flushleft}
		Define, 
	\end{flushleft}
	\begin{align*}
      g(\x) = F(\x) - \frac{\sigma_{f}}{2}\|\x\|^2
	\end{align*}
	\begin{flushleft}
		Strong convexity of $F(\x)$ implies that $g(\x)$ is convex. Consider the function,
	\end{flushleft}
	\begin{align*}
		h(\x) = \frac{L_{f} - \sigma_{f}}{2}\|\x\|^2 - g(\x) = \frac{L_f}{2}\|\x\|^2 - F(\x)
	\end{align*}
	\begin{flushleft}
		Smoothness of $F(\x)$ implies that $h(\x)$ is convex which inturn implies that $g(\x)$ is smooth with parameter $L_{f} - \sigma_{f}$. Applying co-coercivity property for smooth functions on $g(\x)$ gives,
	\end{flushleft}
	\begin{align*}
    	\langle \nabla g(\x) - \nabla g(\y), \x - \y \rangle \geq \frac{1}{L_f - \sigma_{f}}\|\nabla g(\x) - \nabla g(\y)\|^2 \hspace{15mm} \\
	 \implies \langle \nabla F(\x) - \nabla F(\y) - \sigma_{f}(\x - \y), \x - \y \rangle \hspace{32mm} \\ \geq \frac{1}{L_f - \sigma_{f}}\|\nabla F(\x) - \nabla F(\y) - \sigma_{f}(\x - \y)\|^2 \hspace{15mm} \\
	 \end{align*}
	 \vspace{-10mm}
	 \begin{align*}
	\implies (1 + \frac{2\sigma_{f}}{L_f - \sigma_f}) \langle \nabla F(\x) - \nabla F(\y), \x - \y \rangle \hspace{37mm} \\  \geq (\frac{1}{L_f - \sigma_f})\|\nabla F(\x) - \nabla F(\y)\|^2 + (\frac{\sigma_{f}^{2}}{L_f - \sigma_{f}} + \sigma_{f})\|\x - \y\|^2 \hspace{13mm}
	\end{align*}
	\begin{flushleft}
	which gives the result in Lemma 2.
	\end{flushleft}
\end{proof} 

\end{document}